\def\cal H{{\mathcal H}}
\def\phi{\varphi}
\renewcommand{\theta}{\vartheta}
\newtheorem{theorem}{Theorem}[section]
\newtheorem*{thm*}{Theorem}
\theoremstyle{definition}
\numberwithin{equation}{section}
\DeclareMathOperator{\diam}{diam}
\begin{document}

\title[On the hot spots of quantum trees]{On the hot spots of quantum trees}

%% Please delete not needed author entries.
%% Information for the first author.
\author{James Kennedy}
\address{Grupo de F\'isica Matem\'atica, Faculdade de Ci\^{e}ncias, Universidade de Lisboa, Campo Grande, Edif\'icio C6, P-1749-016 Lisboa, Portugal}
\email{jbkennedy@fc.ul.pt}
%%
%%    Information for the second author
\author{Jonathan Rohleder}%
\address{Matematiska institutionen, Stockholms universitet, 106 91 Stockholm, Sweden}
\email{jonathan.rohleder@math.su.se}

%%
%%    \dedicatory{This is a dedicatory.}
%%
%%    Abstract is required.

\begin{abstract}
We show that any second eigenfunction of the Laplacian with standard vertex conditions on a metric tree graph attains its extremal values only at degree one vertices, and give an example where these vertices do not realise the diameter of the graph.
\end{abstract}
%% maketitle must follow the abstract.
\maketitle                   % Produces the title.

\section{Introduction and preliminaries}

The hot spots conjecture asserts that the eigenfunction of the first positive eigenvalue of the Neumann Laplacian on a bounded domain $\Omega \subset \mathbb{R}^d$ should reach its maximum and minimum (only) at the boundary $\partial\Omega$. The intuition is that these points should be located as ``far away'' from each other as possible in some appropriate, weighted sense. While there are counterexamples to the conjecture in full generality, it is still open for convex domains. In fact, a very recent preprint claims a proof for triangles, resolving a polymath project under the aegis of T.~Tao; see \cite{JM18}, also for more references and an account of the history of the problem.

In this note we introduce an analogue for quantum graphs: to have a clear notion of boundary, we restrict ourselves to trees, where the boundary is the set of vertices of degree one. Then it is to be expected, and we prove, that the extrema of the eigenfunction of the first positive eigenvalue of the Laplacian with standard (or continuity-Kirchhoff) vertex conditions, the natural equivalent of Neumann conditions, are all located at the boundary, establishing a hot spots-type theorem for quantum trees. However, an example shows that these extrema need not be located at maximal (Euclidean) distance from each other within the graph, i.e., the distance between these extremal points may be strictly less than the diameter, contrary to na\"ive intuition. In fact, for the \emph{discrete} Laplacian on a tree, where this problem has also been considered, a similar type of example was found in~\cite{E11}. A more complete analysis of the hot spots of quantum graphs (not just trees) will be given elsewhere.

Let us now briefly summarise some important properties of quantum graphs. We will largely follow the notation of \cite{BK13}, to which we also refer for further details. We will consider finite, compact metric trees $\Gamma$, i.e., the vertex set $V$, the edge set $E$, and the lengths of the edges, are all finite, and $\Gamma$ contains no cycles. We call the \emph{boundary} of $\Gamma$ the set of vertices of degree one. The standard (or Kirchhoff) Laplacian $- \Delta_\Gamma$ is a well-known self-adjoint operator defined on $L^2 (\Gamma)$, which has discrete spectrum of the form $0 = \mu_1 (\Gamma) \leq \mu_2 (\Gamma) \leq \ldots$; see \cite[Sec.~1.4]{BK13} for the particulars. If $\Gamma$ is connected then $\mu_2 (\Gamma)>0$ and $\ker (- \Delta_\Gamma)$ consists of all constant functions on $\Gamma$. It follows that each $f \in \ker (- \Delta_\Gamma - \mu_2 (\Gamma))$ is orthogonal to the constants,
\begin{align}\label{eq:orthogonal}
 \int_\Gamma f \,\textrm{d}x = 0.
\end{align}

\subsection*{Acknowledgements}

The work of J.\,K. was supported by the Funda\c{c}\~{a}o para \linebreak a Ci\^{e}ncia e a Tecnologia, via the program ``Investigador FCT'', reference \linebreak IF/01461/2015, and project PTDC/MAT-CAL/4334/2014.

\section{A hot spots theorem on metric trees}

In this section we state and prove the main result of this note.

\begin{theorem}
Let $\Gamma$ be a finite, compact, connected metric tree and let $f$ be an arbitrary eigenfunction of $- \Delta_\Gamma$ corresponding to the first positive eigenvalue $\mu_2 (\Gamma)$. Then all global minima and maxima of $f$ are located at the boundary of~$\Gamma$.
\end{theorem}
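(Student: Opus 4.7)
My plan is to argue by contradiction and violate the variational characterisation
\[
\mu_2(\Gamma) = \min\{\|g'\|_{L^2(\Gamma)}^2/\|g\|_{L^2(\Gamma)}^2 : g \in H^1(\Gamma),\ g \perp \mathbf{1}\}
\]
by means of a cut-and-paste test function. Suppose $f$ is an eigenfunction for $\mu_2(\Gamma)$ whose global maximum $M := \max_\Gamma f$ is attained at a point $x_0$ that is \emph{not} of degree one, i.e.~either an interior point of an edge or a vertex of degree $d \geq 2$. By \eqref{eq:orthogonal} and $f \not\equiv 0$ we have $M > 0$.

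The first step is to observe that the outgoing derivative of $f$ vanishes along every edge meeting $x_0$. On the interior of an edge this is elementary calculus; at a vertex of degree $d \geq 2$ each outgoing derivative is non-positive (as $x_0$ realises a maximum) while the Kirchhoff condition forces their sum to be zero, so each must vanish.

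Cutting $\Gamma$ at $x_0$ then produces $k \geq 2$ connected subtrees $T_1,\dots,T_k$, in each of which $x_0$ becomes a new degree-one vertex. By the previous step, the restriction $f|_{T_i}$ satisfies the standard vertex conditions at the new vertex $x_0$, and hence is a non-constant eigenfunction of $-\Delta_{T_i}$ for the positive eigenvalue $\mu_2(\Gamma)$. Consequently $\int_{T_i} f\,\d x = 0$ (orthogonality to the constant eigenfunctions of $-\Delta_{T_i}$), and Green's identity on $T_i$ gives $\int_{T_i} (f')^2\,\d x = \mu_2(\Gamma)\int_{T_i} f^2\,\d x$.

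Fix any $i$ and set $h := (f-M)\mathbf{1}_{T_i}$, extended by $0$ on $\Gamma\setminus T_i$; this is continuous on $\Gamma$ because $f(x_0)=M$, so $h \in H^1(\Gamma)$. Letting $\bar h := h - |\Gamma|^{-1}\int_\Gamma h\,\d x$, a direct calculation using the identities of the previous step yields
\begin{align*}
\frac{\int_\Gamma (\bar h')^2\,\d x}{\int_\Gamma \bar h^2\,\d x} = \mu_2(\Gamma)\cdot \frac{\int_{T_i} f^2\,\d x}{\int_{T_i} f^2\,\d x + M^2 |T_i|\bigl(1 - |T_i|/|\Gamma|\bigr)} < \mu_2(\Gamma),
\end{align*}
where $|\cdot|$ denotes total edge length; the strict inequality uses $M > 0$ together with $0 < |T_i| < |\Gamma|$ (the upper bound coming from $k \geq 2$). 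Since $\bar h \perp \mathbf{1}$, this contradicts the variational formula above, and the same argument applied to $-f$ handles the global minimum. The only delicate point, and the step I would check first, is that the restrictions $f|_{T_i}$ genuinely satisfy the standard conditions at the new degree-one vertex at $x_0$; this is precisely what the derivative-vanishing step guarantees, after which an elementary Rayleigh-quotient estimate forces the contradiction.
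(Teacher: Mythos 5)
Your proof is correct, and it opens exactly as the paper's does: assume the maximum $M>0$ is attained at a non-boundary point $x_0$, note that the sign of the one-sided derivatives together with the Kirchhoff condition forces every derivative of $f$ at $x_0$ to vanish, cut $\Gamma$ at $x_0$ into subtrees $T_i$, and conclude that each restriction $f|_{T_i}$ is a nontrivial eigenfunction of the standard Laplacian on $T_i$ with eigenvalue $\mu_2(\Gamma)$. Where you genuinely diverge is in extracting the contradiction. The paper invokes the surgery/domain-monotonicity theorem of Kurasov--Malenova--Naboko \cite{KMN13} twice: once to conclude $\mu_2(\hat\Gamma) = \mu_2(\Gamma)$ for a component $\hat\Gamma$, and then its strictness clause (the eigenfunction being nonzero at the gluing vertex) to force $\mu_2(\Gamma) < \mu_2(\hat\Gamma)$. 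You instead build the explicit test function $\bar h$ from $(f-M)\mathbf{1}_{T_i}$ and compute its Rayleigh quotient directly; the computation checks out --- $\int_{T_i} f\,\d x = 0$ kills the cross term, Green's identity (whose boundary terms vanish by the Kirchhoff conditions) gives $\int_{T_i}(f')^2\,\d x = \mu_2(\Gamma)\int_{T_i} f^2\,\d x$, and $M>0$ together with $0<|T_i|<|\Gamma|$ yields the strict inequality --- and since $\bar h \perp \mathbf{1}$ the variational characterisation of $\mu_2(\Gamma)$ is indeed violated. In effect you have reproved, in a self-contained way, exactly the special case of strict domain monotonicity that the paper imports from the literature: your route is slightly longer but elementary and needs no external surgery result, while the paper's is shorter at the cost of citing \cite{KMN13}. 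Both arguments are valid.
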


\begin{proof}
Let $f \in \ker (- \Delta_\Gamma - \mu_2 (\Gamma))$ be nontrivial. We are going to show that the global maximum of $f$ lies on the boundary; the statement for the minimum then follows by considering $-f$. For a contradiction, assume that $f$ has a global maximum at an interior point of $\Gamma$, without loss of generality at a vertex $v$ with $\deg (v) \geq 2$. Then all ingoing derivatives of $f$ at $v$ are nonnegative, and due to the Kirchhoff condition it follows that all these derivatives vanish at $v$. On the other hand, $f (v) > 0$ as it is the maximum of the nontrivial function $f$ which satisfies~\eqref{eq:orthogonal}. Let us disconnect $\Gamma$ at $v$ into $\deg (v)$ subgraphs, i.e., we split $\Gamma$ into a new graph having $\deg (v)$ connected components; see Fig.~\ref{fig:split}. 
\begin{figure}[htb]
  \centering
  \begin{tikzpicture}
    \draw[fill] (0,1) circle(0.05);
    \draw[fill] (-1.2,1.8) circle(0.05);
    \draw[fill] (-0.8,0) circle(0.05);
    \draw[fill] (1.5,1) circle(0.05) node[above]{$v$};
    \draw[fill] (1.9,0.2) circle(0.05);
    \draw[fill] (2.5,1.4) circle(0.05);
    \draw[fill] (2.9,1.7) circle(0.05);
    \draw[fill] (3.1,1.5) circle(0.05);
    \draw[fill] (2.8,1.2) circle(0.05);
    \draw (-1.2,1.8)--(0,1)--(-0.8,0);
    \draw (0,1)--(1.5,1); % split here
    \draw (1.5,1)--(1.9,0.2); % split here
    \draw (1.5,1)--(2.5,1.4)--(2.9,1.7);
    \draw (2.5,1.4)--(3.1,1.5);
    \draw (2.5,1.4)--(2.8,1.2);
    \begin{scope}[shift={(7,0)}]
    \draw[fill] (0,1) circle(0.05);
    \draw[fill] (-1.2,1.8) circle(0.05);
    \draw[fill] (-0.8,0) circle(0.05);
    \draw[fill] (1.5,1) circle(0.05);
    \draw (-1.2,1.8)--(0,1)--(-0.8,0);
    \draw (0,1)--(1.5,1); 
    % split here
    \draw[fill] (1.7,0.6) circle(0.05);  
    \draw (1.7,0.6)--(2.1,-0.2); 
    \draw[fill] (2.1,-0.2) circle(0.05);
    % split here
    \draw (1.9,1)--(2.9,1.4)--(3.3,1.7);
    \draw (2.9,1.4)--(3.5,1.5);
    \draw (2.9,1.4)--(3.2,1.2);
    \draw[fill] (1.9,1) circle(0.05);
    \draw[fill] (2.9,1.4) circle(0.05);
    \draw[fill] (3.3,1.7) circle(0.05);
    \draw[fill] (3.5,1.5) circle(0.05);
    \draw[fill] (3.2,1.2) circle(0.05);
    \end{scope}
  \end{tikzpicture}
  \caption{The tree $\Gamma$ before and after splitting at the vertex $v$.}
  \label{fig:split}
\end{figure}
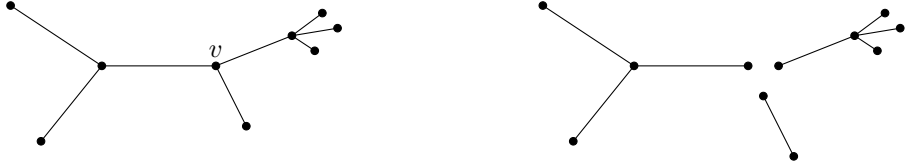
Let $\hat \Gamma$ be one of these components. By the above observations, $\hat f := f|_{\hat \Gamma}$ is not identically zero on~$\hat \Gamma$ and has a vanishing derivative at $v$; as $- \hat f'' = \mu_2 (\Gamma) \hat f$ holds edgewise on $\hat \Gamma$, it follows that $\hat f$ is an eigenfunction of $- \Delta_{\hat \Gamma}$ corresponding to the eigenvalue $\mu_2 (\Gamma)$. In particular, $\mu_2 (\hat \Gamma) \leq \mu_2 (\Gamma)$. On the other hand, $\Gamma$ can be obtained from $\hat \Gamma$ by gluing pendant trees to the vertex $v$ of $\hat \Gamma$ so that actually $\mu_2 (\hat \Gamma) = \mu_2 (\Gamma)$ by the domain monotonicity principle~\cite[Thm.~2]{KMN13}. Hence $\hat f$ is an eigenfunction of $- \Delta_{\hat \Gamma}$ corresponding to $\mu_2 (\hat \Gamma)$ which is nonzero at the splitting vertex $v$, and thus~\cite[Thm.~2]{KMN13} even yields $\mu_2 (\Gamma) < \mu_2 (\hat \Gamma)$, a contradiction. 
\end{proof}

\section{An example}

We now give an example showing that the global maxima and minima of the eigenfunction $f$ corresponding to $\mu_2 (\Gamma)$ need not be at the points realising the diameter of a tree, but may be closer together. To construct this example, we start out with a path graph (interval) $\mathcal{P}$ of length $1$ and a star graph $\mathcal{S}$ consisting of three edges, each of length $\frac{1}{2}-\varepsilon$ for some $\varepsilon \geq 0$ to be chosen later, meeting at a central vertex, as depicted in Fig.~\ref{fig:example} (left). We form $\Gamma$ by first reflecting two copies of each of them, to form $\mathcal{P}^2$ and $\mathcal{S}^2$, respectively, and then gluing these together at a central vertex $v_0$ as shown in Fig.~\ref{fig:example}.
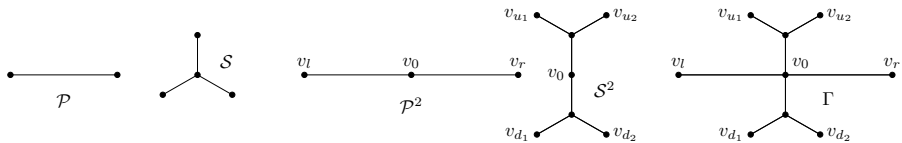
\begin{figure}[htb]
  \centering
  \resizebox{12cm}{!}{
  \begin{tikzpicture}
    %path graph
    \draw[fill] (0,0) circle(0.05);
    \draw[fill] (2,0) circle(0.05);
    \draw (0,0) -- (2,0);
    \node at (1,-0.25) [anchor=north] {$\mathcal{P}$};
    %star graph
    \draw[fill] (3.5,0) circle(0.05);
    \draw[fill] (3.5,0.75) circle(0.05);
    \draw[fill] (2.85,-0.375) circle(0.05);
    \draw[fill] (4.15,-0.375) circle(0.05);
    \draw (3.5,0) -- (3.5,0.75);
    \draw (3.5,0) -- (2.85,-0.375);
    \draw (3.5,0) -- (4.15,-0.375);
    \node at (3.8,0) [anchor=south west] {$\mathcal{S}$};
    %reflected copies
    \begin{scope}[shift={(6.5,0)}]
    \draw[fill] (-1,0) circle(0.05);
    \draw[fill] (1,0) circle(0.05);
    \draw[fill] (3,0) circle(0.05);
    \draw (-1,0) -- (3,0);
    \draw[fill] (4,0) circle(0.05);
    \draw[fill] (4,0.75) circle(0.05);
    \draw[fill] (4,-0.75) circle(0.05);
    \draw[fill] (3.35,1.125) circle(0.05);
    \draw[fill] (4.65,1.125) circle(0.05);
    \draw[fill] (3.35,-1.125) circle(0.05);
    \draw[fill] (4.65,-1.125) circle(0.05);
    \draw[fill] (4,0) -- (4,0.75);
    \draw[fill] (4,0) -- (4,-0.75);
    \draw[fill] (4,0.75) -- (3.35,1.125);
    \draw[fill] (4,0.75) -- (4.65,1.125);
    \draw[fill] (4,-0.75) -- (3.35,-1.125);
    \draw[fill] (4,-0.75) -- (4.65,-1.125);
    \node at (1,0) [anchor=south] {$v_0$};
    \node at (4,0) [anchor=east] {$v_0$};
    \node at (-1,0) [anchor=south] {$v_l$};
    \node at (3,0) [anchor=south] {$v_r$};
    \node at (3.35,1.125) [anchor=east] {$v_{u_1}$};
    \node at (4.65,1.125) [anchor=west] {$v_{u_2}$};
    \node at (3.35,-1.125) [anchor=east] {$v_{d_1}$};
    \node at (4.65,-1.125) [anchor=west] {$v_{d_2}$};
    \node at (1,-0.3) [anchor=north] {$\mathcal{P}^2$};
    \node at (4.3,0) [anchor=north west] {$\mathcal{S}^2$};
    \end{scope}
    %example graph
    \begin{scope}[shift={(14.5,0)}]
    \draw[fill] (0,0) circle(0.05);
    \draw[fill] (-2,0) circle(0.05);
    \draw[fill] (2,0) circle(0.05);
    \draw[fill] (0,0.75) circle(0.05);
    \draw[fill] (0,-0.75) circle(0.05);
    \draw[fill] (-0.65,1.125) circle(0.05);
    \draw[fill] (0.65,1.125) circle(0.05);
    \draw[fill] (-0.65,-1.125) circle(0.05);
    \draw[fill] (0.65,-1.125) circle(0.05);
    \draw[fill] (0,0) -- (-2,0);
    \draw[fill] (0,0) -- (2,0);
    \draw[fill] (0,0) -- (0,0.75);
    \draw[fill] (0,0) -- (0,-0.75);
    \draw[fill] (0,0.75) -- (-0.65,1.125);
    \draw[fill] (0,0.75) -- (0.65,1.125);
    \draw[fill] (0,-0.75) -- (-0.65,-1.125);
    \draw[fill] (0,-0.75) -- (0.65,-1.125);
    \node at (0,0) [anchor=south west] {$v_0$};
    \node at (-2,0) [anchor=south] {$v_l$};
    \node at (2,0) [anchor=south] {$v_r$};
    \node at (-0.65,1.125) [anchor=east] {$v_{u_1}$};
    \node at (0.65,1.125) [anchor=west] {$v_{u_2}$};
    \node at (-0.65,-1.125) [anchor=east] {$v_{d_1}$};
    \node at (0.65,-1.125) [anchor=west] {$v_{d_2}$};
    \node at (0.8,-0.2) [anchor=north] {$\Gamma$};
    \end{scope}
  \end{tikzpicture}
  }
  \caption{The path graph $\mathcal{P}$ and the star $\mathcal{S}$ (left); the graphs $\mathcal{P}^2$ and $\mathcal{S}^2$ formed by reflecting them (centre); the graph $\Gamma$ formed by gluing the reflections together at the central vertex $v_0$ (right). Here $l$ stands for ``left'', $r$ for ``right'', $u$ for ``up'' and $d$ for ``down''.}
  \label{fig:example}
\end{figure}
Then $\diam(\Gamma) = 2$ is realised only by the path joining $v_l$ and $v_r$ provided $\varepsilon>0$. But we claim that for sufficiently small $\varepsilon>0$, $\mu_2 (\Gamma)$ is simple and its eigenfunction $f$ vanishes identically on $\mathcal{P}^2 \subset \Gamma$, being supported on $\mathcal{S}^2 (\Gamma)$ and without loss of generality reaching its maximum at the vertices $v_{u_1}$, $v_{u_2}$ and minimum at the vertices $v_{d_1}$, $v_{d_2}$. Indeed, by standard arguments using the symmetries of $\Gamma$, the eigenfunctions of $-\Delta_\Gamma$ can be chosen to form an orthonormal basis of $L^2 (\Gamma)$, such that each eigenfunction $f$ falls into exactly one of the following categories:

1. $f(v_0)=0$ and $f$ is supported on $\mathcal{P}^2$, corresponding to an eigenfunction and eigenvalue of $-\Delta_{\mathcal{P}^2}$;

2. $f(v_0)=0$ and $f$ is supported on $\mathcal{S}^2$, corresponding to an eigenfunction and eigenvalue of $-\Delta_{\mathcal{S}^2}$;

3. $f(v_0) \neq 0$ and $f$ is supported on the whole of $\Gamma$.

\noindent The smallest eigenvalue in the first case is $\mu_2 (\mathcal{P}^2) = \pi^2/4$, while the smallest in the second is $\mu_2 (\mathcal{S}^2)$; both of these are seen to be simple, and it may be checked that the eigenfunction of the latter reaches its extrema at the degree-one vertices of $\mathcal{S}^2$. Now if $\varepsilon=0$, then $\mu_2 (\mathcal{S}^2) < \mu_2 (\mathcal{P}^2)$ by \cite[Thm.~2]{KMN13}. Hence, by continuity of $\mu_2$ with respect to edge lengths, cf.~\cite[Sec.~3.1.2]{BK13}, $\mu_2 (\mathcal{S}^2) < \mu_2 (\mathcal{P}^2)$ still holds if $\varepsilon>0$ is small enough; fix any $\varepsilon>0$ with this property. In the third case, the smallest eigenvalue is $0 = \mu_1 (\Gamma)$. But if $f$ is any non-constant eigenfunction in this class with eigenvalue $\mu$, then since $f$ is sign-changing by \eqref{eq:orthogonal} and $f(v_0)\neq 0$, at least one of its nodal domains (connected components of $\{f\neq 0\}$), call it $\mathcal{N}$, must be a proper subset of one of the copies of $\mathcal{P}$ or $\mathcal{S}$, say $\mathcal{S}$. Then by standard arguments $\mu = \lambda_1 (\mathcal{N})$, the first eigenvalue of the Laplacian with a Dirichlet condition at $\partial{\mathcal{N}}:=\overline{\mathcal{N}} \cap (\Gamma\setminus \mathcal{N})$ and standard conditions elsewhere. But then $\lambda_1 (\mathcal{N}) > \lambda_1 (\mathcal{S})$ by a Dirichlet version of the strict domain monotonicity principle, since $\mathcal{N} \subsetneq \mathcal{S}$ (where $\mathcal{S}$ is equipped with a Dirichlet condition at $v_0$), cf.~\cite[Thm.~3.10]{BKKM18}. On the other hand, $\lambda_1 (\mathcal{S}) = \mu_2 (\mathcal{S}^2) < \mu$ is the smallest eigenvalue in the second case; hence no eigenvalue in the third category can equal $\mu_2 (\Gamma)$. An analogous argument yields a similar comparison with the first case if $\mathcal{N}\subset\mathcal{P}$ instead. At any rate, we conclude that $\mu_2 (\Gamma)$ is simple and equals $\mu_2 (\mathcal{S}^2)$ for $\varepsilon>0$ small enough, and the unique eigenfunction reaches its maximum at $v_{u_1}$, $v_{u_2}$ and minimum at $v_{d_1}$, $v_{d_2}$. More sophisticated variants of this example, such as where the distance between maximal and minimal points of $f$ can be arbitrarily small, will be discussed at a later point.

\vspace{\baselineskip}

\end{document}